\newtheorem{theorem}{Theorem}
\newtheorem{proposition}[theorem]{Proposition}
\newcommand{\Cbb}{\mathbb{C}}
\newcommand{\Rbb}{\mathbb{R}}
\begin{document}

\title{A Simple Proof of Unique Continuation for $J$-holomorphic Curves}
\author{Michael VanValkenburgh}
\date{May 1, 2009}
\address{UCLA Department of Mathematics, Los Angeles, CA 90095-1555, USA}
\email{mvanvalk@ucla.edu}

\begin{abstract}
    In this expository paper, we prove strong unique continuation for $J$-holomorphic curves by first giving a simple proof of Aronszajn's theorem in the special case of the two-dimensional flat Laplacian.
\end{abstract}

\maketitle

\section{Introduction}

In the study of $J$-holomorphic curves and symplectic topology as presented by McDuff and Salamon \cite{R:McDSalJ}, a basic fact is the strong unique continuation property for $J$-holomorphic curves. In their book, the strong unique continuation property is a first step in a chain of events leading to the proof that, for a generic almost complex structure $J$, the moduli space $\mathcal{M}^{*}(A,\Sigma;J)$ of simple $J$-holomorphic $A$-curves is a smooth finite dimensional manifold, and from there to the construction of the Gromov-Witten invariants for a suitable class of symplectic manifolds (see pages 4 and 38 of \cite{R:McDSalJ} for the outline of this approach).

McDuff and Salamon give three proofs of the unique continuation
property. The first proof is a few lines long but cites
Aronszajn's theorem as proven in \cite{R:Aronszajn}. The second
and third proofs are given self-contained treatments, and,
moreover, the methods find further application in their book. The
second proof uses the Hartman--Wintner theorem \cite{R:HartWint}
(proven in McDuff and Salamon's Appendix~E.4), which in fact
implies the needed special case of Aronszajn's theorem, and the
third proof uses the Carleman similarity principle and the
Riemann-Roch theorem (proven in their Appendix~C).

Here we return to the first method of proof, but give a simplified argument. The method is well known in certain branches of partial differential equations; it is the method of weighted integral estimates depending on a parameter. This is also the approach of Aronszajn \cite{R:Aronszajn}, but it goes back even further, to Carleman \cite{R:Carleman}. For a general treatment with some historical comments, one may consult Sections~17.1 and 17.2 of H\"{o}rmander's book \cite{R:HoIII} or his corresponding paper \cite{R:HoUniq}. However, all these references give much more than is needed for our application. Here we present only what is needed for $J$-holomorphic curves.

We give the full details for the case of $C^{\infty}$ $J$-holomorphic curves; for $J$-holomorphic curves in Sobolev spaces with minimal assumptions, discussed in McDuff and Salamon's book, one may find the appropriate modifications in Sections 17.1 and 17.2 of H\"{o}rmander's book \cite{R:HoIII}. Here we focus on the $C^{\infty}$ case, for ease of exposition and since the $C^{\infty}$ case is sufficient for many purposes; after all, in Gromov's original definition all $J$-holomorphic curves are $C^{\infty}$ \cite{R:GromovJ}.

The weighted integral estimates will depend on a parameter $0<h\ll 1$ which may be interpreted as ``Planck's constant'', as appearing in the correspondence principle of the old quantum theory, or, more generally, as appearing in semiclassical analysis \cite{R:ZwSC}. The general idea is that as $h$ tends to zero, asymptotic analysis reveals the classical mechanics of the operator's symbol, interpreted as a Hamiltonian function. Hence symplectic geometry plays a role beneath the surface.

We begin by recalling the basic definitions, so that our presentation is self-contained. Let $(\Sigma,j)$ be a Riemann surface and $(M,J)$ an almost complex manifold. A smooth function $u:\Sigma \rightarrow M$ is called a \textbf{$J$-holomorphic curve} if its differential $du$ is a complex linear map with respect to $j$ and $J$; that is, if $$J\circ du =du\circ j,$$ or, equivalently,
$$\bar\partial_{J}(u):=\frac{1}{2}\left(du+J\circ du\circ j\right)=0.$$

Unique continuation is a local problem, so for our purposes we may take the domain of $u$ to be a connected neighborhood $X\subset\Cbb$ of the origin, writing the elements of $X$ as $x=x_{1}+ix_{2}$, and we may take $M$ to be $\Cbb^{n}$. Hence we are interested in those $u\in C^{\infty}(X,\Cbb^{n})$ satisfying
\begin{equation}\label{E:locdefJhol}
    \partial_{x_{1}}u+J(u)\partial_{x_{2}}u=0,
\end{equation}
where $J:\Cbb^{n}\rightarrow \text{GL}(2n,\Rbb)$ is, say, a $C^{1}$ function such that $J^{2}=-I$.

The main point of this paper is to give a simple, elementary proof of the following strong unique continuation result:
\begin{theorem}\label{T:uniqJhol}
    Let $X\subset\Cbb$ be a connected neighborhood of $0$, and suppose $u,v\in C^{\infty}(X,\Cbb^{n})$ satisfy (\ref{E:locdefJhol}) for some $C^{1}$ almost complex structure $J: \Cbb^{n}\rightarrow \text{GL}(2n,\Rbb)$. If $u-v$ vanishes to infinite order at $0$, then $u=v$ in $X$.
\end{theorem}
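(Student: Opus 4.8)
The plan is to reduce Theorem~\ref{T:uniqJhol} to a Carleman-type weighted estimate for the flat Laplacian on $\Rbb^2$, which is the one genuinely analytic ingredient. Write $w:=u-v\in C^{\infty}(X,\Cbb^{n})$. Subtracting the two copies of (\ref{E:locdefJhol}) and adding and subtracting $J(u)\partial_{x_2}v$ gives
$$\partial_{x_1}w+J(u)\,\partial_{x_2}w=-\bigl(J(u)-J(v)\bigr)\partial_{x_2}v.$$
Since $J$ is $C^{1}$ it is locally Lipschitz, so $|J(u(x))-J(v(x))|\le C|w(x)|$ on any compact subset of $X$, while $\partial_{x_2}v$ is bounded there; hence the right-hand side $g(x)$ obeys the pointwise bound $|g(x)|\le C|w(x)|$. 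Setting $\mathcal J(x):=J(u(x))$, which is $C^{1}$ in $x$ with $\mathcal J(x)^{2}=-I$, $w$ therefore solves the first-order elliptic differential inequality $|\partial_{x_1}w+\mathcal J(x)\partial_{x_2}w|\le C|w|$.

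Next I would pass to a second-order inequality adapted to $\Delta$. Applying the conjugate operator $\partial_{x_1}-\mathcal J\,\partial_{x_2}$ and using $\mathcal J^{2}=-I$ together with equality of mixed partials yields the pointwise identity
$$(\partial_{x_1}-\mathcal J\,\partial_{x_2})(\partial_{x_1}+\mathcal J\,\partial_{x_2})w=\Delta w+\bigl(\partial_{x_1}\mathcal J-\mathcal J\,\partial_{x_2}\mathcal J\bigr)\partial_{x_2}w,$$
whose coefficient is bounded because $\mathcal J\in C^{1}$. Combined with the first-order equation this gives a differential inequality of the shape $|\Delta w|\le C(|w|+|\nabla w|)$ after controlling the derivatives of $g$; this is exactly where the regularity of $J$ must be spent carefully, and I expect that bookkeeping to be the most delicate routine point. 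A clean alternative that respects the bare $C^{1}$ hypothesis is to stay first order: freeze the principal part by a linear change of coordinates making $\mathcal J(0)$ equal to the standard complex structure $J_0$, so that $\partial_{x_1}+\mathcal J\,\partial_{x_2}$ becomes $2\bar\partial$ plus an $O(|x|)$ perturbation, and run the Carleman argument directly for $\bar\partial$.

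The heart of the matter is the weighted estimate. Using the semiclassical parameter $0<h\ll1$ and a radial weight $\phi$ that is strictly convex in $\log|x|$ (equivalently the power weight $|x|^{-1/h}$), I would prove the flat-Laplacian Carleman inequality in conjugated form: with $P_{\phi}:=h^{2}e^{\phi/h}\,\Delta\,e^{-\phi/h}$, there are constants $C,h_{0}>0$ such that $\|\psi\|_{L^{2}}\le C\sqrt{h}\,\|P_{\phi}\psi\|_{L^{2}}$ for all $\psi\in C_{c}^{\infty}$ supported near $0$ and all $h\in(0,h_{0})$, the estimate simultaneously controlling a first-derivative term $\|h\nabla\psi\|_{L^{2}}$ on the left. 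The proof is the standard conjugation-and-integration-by-parts computation: split $P_{\phi}$ into its formally self-adjoint and skew-adjoint parts $A$ and $B$, expand $\|P_{\phi}\psi\|^{2}=\|A\psi\|^{2}+\|B\psi\|^{2}+\langle[A,B]\psi,\psi\rangle$, and choose $\phi$ so that the commutator contributes a positive lower-order gain (H\"{o}rmander's sub-ellipticity condition), which here is elementary because the Laplacian is flat and two-dimensional. This is the step I expect to be the main obstacle, since all the asymptotics in $h$ and the precise powers must line up so that the lower-order terms are genuinely absorbed.

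Finally I would assemble the pieces. Apply the estimate to $\psi=\chi w$ for a cutoff $\chi$ equal to $1$ near $0$; the commutator $[\Delta,\chi]$ is supported away from $0$, where $e^{\phi/h}$ is exponentially smaller, so those terms are harmless for small $h$, and the infinite-order vanishing of $w$ at $0$ guarantees the weighted integrals converge with no surviving contribution at the origin. Inserting $|\Delta(\chi w)|\le C(|w|+|\nabla w|)+(\text{cutoff terms})$ into the right-hand side and taking $h$ small, the gain in $h$ dominates the $C(|w|+|\nabla w|)$ terms, which are thereby absorbed into the left-hand side, forcing $w\equiv0$ on a neighborhood of $0$. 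A standard connectedness argument then finishes: the set of points of $X$ at which $w$ vanishes to infinite order is nonempty (it contains $0$), closed by continuity, and open by the local statement just proved, hence all of $X$; therefore $u=v$ on $X$.
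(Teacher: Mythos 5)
Your proposal follows essentially the same route as the paper: reduce $w=u-v$ to the second-order differential inequality $|\Delta w|\le C(|w|+|\nabla w|)$, prove a Carleman estimate for the flat Laplacian with a radial weight convexified in $\log|x|$ via the decomposition $\|P_{\phi}\psi\|^{2}=\|A\psi\|^{2}+\|B\psi\|^{2}+\langle[A,B]\psi,\psi\rangle$ and positivity of the commutator, and finish by cutoff, absorption for small $h$, and connectedness. The one slip is the power of $h$ in your stated estimate: $\|\psi\|\le C\sqrt{h}\,\|P_{\phi}\psi\|$ should read $\sqrt{h}\,\|\psi\|\le C\|P_{\phi}\psi\|$, which is what the commutator actually produces (and is what the paper's Proposition gives) and which still suffices for the absorption step.
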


\vspace{12pt}

\noindent\textbf{Acknowledgements.} I thank Dusa McDuff and Dietmar Salamon for pointing out that the Hartman--Wintner theorem implies the special case of Aronszajn's theorem. Moreover, it is a pleasure to thank them for their excellent books.

\vspace{12pt}

\section{Proof of Unique Continuation}\label{S:proof}

Let $\Delta=\partial_{x_{1}}^{2}+\partial_{x_{2}}^{2}$ be the standard Laplacian. Since $(\partial_{x_{1}}J)J+J\partial_{x_{1}}J=0$, we have that any solution $u$ of (\ref{E:locdefJhol}) is also a solution of
\begin{equation*}
    \Delta u=(\partial_{x_{2}}J(u))\partial_{x_{1}}u-(\partial_{x_{1}}J(u))\partial_{x_{2}}u.
\end{equation*}
If $v$ is another such function, then
\begin{align*}
    \Delta(u-v)&=(\partial_{x_{2}}J(u))\partial_{x_{1}}(u-v)+[\partial_{x_{2}}(J(u)-J(v))]\partial_{x_{1}}v\\
        &\qquad\qquad -(\partial_{x_{1}}J(u))\partial_{x_{2}}(u-v)-[\partial_{x_{1}}(J(u)-J(v))]\partial_{x_{2}}v.
\end{align*}
Also, of course,
\begin{equation*}
    J(u)-J(v)=\int_{0}^{1}dJ(v+\tau(u-v))d\tau\cdot(u-v).
\end{equation*}
So, if $w:=u-v$, then for some constant $C>0$ we have
\begin{equation*}
    |\Delta w|\leq C(|w|+|\partial_{x_{1}}w|+|\partial_{x_{2}}w|).
\end{equation*}
Since we are considering fixed functions $u$ and $v$, the constant is allowed to depend on $u$, $v$, and their derivatives.

Thus Theorem~\ref{T:uniqJhol} is a consequence of the following unique continuation result, a special case of Aronszajn's theorem \cite{R:Aronszajn}. (We follow the presentation of Theorem~17.2.6 in H\"{o}rmander's book \cite{R:HoIII}.)

\begin{theorem}\label{T:uniqDiffIneq}
    Let $X\subset\Rbb^{2}$ be a connected neighborhood of $0$, and let $u\in C^{\infty}(X,\Cbb^{n})$ be such that
    \begin{equation}\label{E:DiffIneq}
        |\Delta u|\leq C(|u|+|\partial_{x_{1}}u|+|\partial_{x_{2}}u|).
    \end{equation}
    If $u$ vanishes to infinite order at $0$, then $u=0$ in $X$.
\end{theorem}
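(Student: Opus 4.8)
The plan is to deduce the theorem from a weighted $L^{2}$ estimate—a Carleman estimate—for the Laplacian, carrying a large parameter $\tau=1/h$. First I would reduce the global statement to a purely local one: it suffices to show that $u$ vanishes in some ball $B(0,\rho)$. Indeed, once this local version is available, the set of points of $X$ at which $u$ vanishes to infinite order is open (apply the local result at each such point), closed (it is the intersection of the zero sets of all the derivatives $\partial^{\alpha}u$), and nonempty by hypothesis; connectedness of $X$ then forces $u\equiv 0$.

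For the local result the target is a Carleman inequality of the form
\[
    \tau^{3}\int e^{2\tau\varphi}|w|^{2}\,dx+\tau\int e^{2\tau\varphi}|\nabla w|^{2}\,dx\leq C\int e^{2\tau\varphi}|\Delta w|^{2}\,dx,
\]
valid for all large $\tau$ and all $w\in C^{\infty}_{c}$ supported near $0$, where $\varphi$ is a suitable weight. The powers of $\tau$ are chosen precisely so that the first-order right-hand side of (\ref{E:DiffIneq}) can be absorbed: applying the inequality to $w=\chi u$ with $\chi$ a cutoff equal to $1$ near $0$, the bound (\ref{E:DiffIneq}) gives $|\Delta w|^{2}\leq C'(|u|^{2}+|\nabla w|^{2})$ on $\{\chi=1\}$, and for $\tau$ large both weighted terms on the right are dominated by $\tau^{3}\int e^{2\tau\varphi}|w|^{2}$ and $\tau\int e^{2\tau\varphi}|\nabla w|^{2}$ on the left. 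The remaining contributions come from $[\Delta,\chi]u$, supported in an annulus bounded away from $0$ on which $\varphi$ is strictly smaller than near the origin; after dividing through by the weight and letting $\tau\to\infty$ these error terms are crushed and one concludes $u=0$ near $0$. The hypothesis of infinite-order vanishing enters exactly here: it guarantees that the weighted integrals $\int e^{2\tau\varphi}|u|^{2}$ converge near the singular point of the weight, so that the estimate may legitimately be applied.

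To prove the Carleman inequality I would conjugate: set $P_{\varphi}=e^{\tau\varphi}\Delta e^{-\tau\varphi}$ and split it into its self-adjoint and anti-self-adjoint parts, $P_{\varphi}=A+B$ with $A^{*}=A$ and $B^{*}=-B$. Expanding $\|P_{\varphi}w\|^{2}=\|Aw\|^{2}+\|Bw\|^{2}+\langle[A,B]w,w\rangle$ isolates the commutator $[A,B]$, whose leading part in $\tau$ is a positive multiple of the Hessian of $\varphi$ paired against both the cotangent variable and $\nabla\varphi$; in semiclassical terms this is the Poisson bracket $\{a,b\}$ of the real and imaginary parts of the symbol of $P_{\varphi}$, which is positive on the characteristic set $\{a=b=0\}$ when $\varphi$ is strictly convex. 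This is where the symplectic/semiclassical mechanism advertised in the introduction does the work.

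The main obstacle is the choice of weight. The natural radial weight $\varphi=-\log|x|$, which blows up at $0$ and makes the near-origin integrals finite under infinite-order vanishing, is \emph{not} convex on $\Rbb^{2}$—its Hessian has eigenvalues $\pm|x|^{-2}$—so the commutator is not directly positive. I would circumvent this by passing to logarithmic polar coordinates $t=\log|x|$, under which $|x|^{2}\Delta$ becomes the flat cylinder Laplacian $\partial_{t}^{2}+\partial_{\theta}^{2}$ on $\Rbb_{t}\times S^{1}$ and infinite-order vanishing at $0$ becomes rapid decay as $t\to-\infty$. In these coordinates a weight $\varphi=\varphi(t)$ need only be strictly convex in the single variable $t$ for the leading commutator term $4\tau^{3}(\varphi')^{2}\varphi''$ to be positive, which is easy to arrange. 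Matching the powers of $\tau$ so that exactly the first-order term of (\ref{E:DiffIneq}) is absorbed, while controlling the lower-order-in-$\tau$ remainder in the commutator, is the delicate bookkeeping that remains.
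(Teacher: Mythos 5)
Your outline is essentially the paper's: pass to logarithmic polar coordinates so that $|x|^{2}\Delta$ becomes the cylinder Laplacian and infinite-order vanishing becomes rapid decay as $t\to-\infty$; prove a Carleman estimate by conjugating, splitting into symmetric and antisymmetric parts, and expanding $\|Aw\|^{2}+\|Bw\|^{2}+\langle[A,B]w,w\rangle$; absorb the first-order right-hand side of (\ref{E:DiffIneq}) by matching powers of $\tau=1/h$; kill the cutoff commutator errors (supported away from the origin) by letting $\tau\to\infty$; and finish with the open--closed--connectedness argument. The only structural difference is cosmetic: you keep the coordinate $t$ and build convexity into the weight $\varphi(t)$, while the paper keeps a linear weight $e^{-2T/h}$ and convexifies the coordinate via the Alinhac--Baouendi substitution $t=T+e^{\epsilon T}$; these are the same device.

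There is, however, one step that would fail as written, and it is exactly the point the substitution is designed to handle. Your target inequality has a \emph{uniform} factor $\tau^{3}$ in front of $\int e^{2\tau\varphi}|w|^{2}$, and since you correctly identify the leading commutator term as $4\tau^{3}(\varphi')^{2}\varphi''$, this forces $\varphi''\geq c>0$ on the whole half-cylinder $t<T_{0}$. Any such $\varphi$ grows at least quadratically as $t\to-\infty$, and then the hypothesis $|U|\leq C_{N}e^{Nt}$ is not strong enough to make $\int e^{2\tau\varphi}|U|^{2}$ finite: a quadratic exponent beats every linear one, so the very finiteness you invoke to justify applying the estimate to $u$ is lost. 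The resolution is to accept a \emph{degenerately} convex weight, e.g.\ $\varphi(t)=-t+e^{\epsilon t}$ with $\varphi''=\epsilon^{2}e^{\epsilon t}\to 0$, and correspondingly a degenerate Carleman estimate whose left-hand side carries the extra factor $\varphi''\sim e^{\epsilon t}$ --- this is precisely the factor $e^{\epsilon T}$ in (\ref{E:CarlemanEst}). The weakened estimate still suffices because the error coming from (\ref{E:DiffIneq}) carries a factor $e^{2t}\leq e^{\epsilon t}$ for $t\ll 0$ and $0<\epsilon<1$. Beyond this, the ``delicate bookkeeping'' you defer is genuinely the bulk of the proof (in particular the cross term (\ref{E:Trouble}), mixing $\partial_{\theta}^{2}$ with the zeroth-order part of $A$, needs the splitting with the auxiliary parameter $\lambda\in(2,3)$), but your structural plan for it is the correct one.
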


\begin{proof}
    For notational purposes, we assume $n=1$. The proof works line-by-line for the general case.

    We first introduce conformal polar coordinates in $\Rbb^{2}\backslash\{0\}$,
    $$(x_{1},x_{2})=(e^{t}\cos\theta,e^{t}\sin\theta)$$
    with $t\in\Rbb$ and $\theta\in S^{1}$. Then, in these coordinates,
    \begin{equation*}
        \partial_{x_{1}}=e^{-t}\cos\theta\,\partial_{t}-e^{-t}\sin\theta\,\partial_{\theta},
    \end{equation*}
    \begin{equation*}
        \partial_{x_{2}}=e^{-t}\sin\theta\,\partial_{t}+e^{-t}\cos\theta\,\partial_{\theta},
    \end{equation*}
    and
    \begin{equation*}
        \Delta=e^{-2t}(\partial_{t}^{2}+\partial_{\theta}^{2}).
    \end{equation*}

    Next, we ``convexify'' the coordinates. Let $0<\epsilon<1$, and let $T$ be such that $$t=T+e^{\epsilon T}.$$ As noted by H\"{o}rmander \cite{R:HoUniq}, this change of coordinates comes from the work of Alinhac and Baouendi \cite{R:AlinBao}. Then
    $$\frac{\partial t}{\partial T}=1+\epsilon e^{\epsilon T}>0,$$ and $T<t<T+1<T/2$ when $T<-2$. In these coordinates,
    \begin{equation*}
        \partial_{t}^{2}+\partial_{\theta}^{2}=(1+\epsilon e^{\epsilon T})^{-2}\partial_{T}^{2}-\epsilon^{2}(1+\epsilon e^{\epsilon T})^{-3}e^{\epsilon T}\partial_{T}+\partial_{\theta}^{2}.
    \end{equation*}
    Multiplying by $(1+\epsilon e^{\epsilon T})^{2}$, we get the operator
    \begin{equation*}
        Q:=\partial_{T}^{2}+c(T)\partial_{T}+(1+\epsilon e^{\epsilon T})^{2}\partial_{\theta}^{2},
    \end{equation*}
    with $$c(T):=-\epsilon^{2}(1+\epsilon e^{\epsilon T})^{-1}e^{\epsilon T}.$$

    \vspace{12pt}

    Our main tool is the following estimate:
    \begin{proposition}\label{P:Carlem}
        For some $T_{0}<0$ and some $h_{0}>0$ we have
        \begin{equation}\label{E:CarlemanEst}
        \begin{aligned}
            &h\iint\left(|U|^{2}+|h\partial_{T}U|^{2}+|h\partial_{\theta}U|^{2}+|h^{2}\partial_{T}^{2}U|^{2}
            +|h^{2}\partial_{T,\theta}^{2}U|^{2}+|h^{2}\partial_{\theta}^{2}U|^{2}\right)e^{-2T/h+\epsilon T}d\theta\,dT\\
            &\qquad\qquad\qquad\qquad\qquad\qquad\qquad\qquad\qquad\leq C\iint |h^{2}QU|^{2}e^{-2T/h}d\theta\,dT
        \end{aligned}
        \end{equation}
        for all $U\in C_{0}^{\infty}((-\infty,T_{0})\times S^{1})$, and for all $h\in (0,h_{0})$. (The constant $C>0$ is independent of $h$.)
    \end{proposition}
    \begin{proof} (of the Proposition.)
        We set $U:=e^{T/h}V$ and let $$\tilde{Q}:=h^{2}e^{-T/h}\circ Q\circ e^{T/h}.$$ That is,
        \begin{equation*}
            \tilde{Q}=(h\partial_{T}+1)^{2}+hc(T)(h\partial_{T}+1)+(1+\epsilon e^{\epsilon T})^{2}h^{2}\partial_{\theta}^{2}.
        \end{equation*}
        Then the estimate (\ref{E:CarlemanEst}) is equivalent to
        \begin{equation}\label{E:NewerCarlemanEst}
        \begin{aligned}
            &h\iint\left(|V|^{2}+|h\partial_{T}V|^{2}+|h\partial_{\theta}V|^{2}+|h^{2}\partial_{T}^{2}V|^{2}
            +|h^{2}\partial_{T,\theta}^{2}V|^{2}+|h^{2}\partial_{\theta}^{2}V|^{2}\right)e^{\epsilon T}d\theta\,dT\\
            &\qquad\qquad\qquad\qquad\qquad\qquad\qquad\qquad\qquad\leq C\iint |\tilde{Q}V|^{2}d\theta\,dT
        \end{aligned}
        \end{equation}
        for all $V\in C_{0}^{\infty}((-\infty,T_{0})\times S^{1})$.

        For bookkeeping purposes, we write $\tilde{Q}$ as the sum of its symmetric and antisymmetric parts, $$\tilde{Q}=A+B,$$ where
        \begin{equation*}
            A=h^{2}\partial_{T}^{2}+(1+hc-\frac{1}{2}h^{2}c^{\prime})+(1+\epsilon e^{\epsilon T})^{2}h^{2}\partial_{\theta}^{2},
        \end{equation*}
        and
        \begin{equation*}
            B=(2+hc)h\partial_{T}+\frac{1}{2}h^{2}c^{\prime}.
        \end{equation*}
        Hence, using the usual inner product notation on $L^{2}$, and with $[A,B]=AB-BA$ denoting the commutator,
        \begin{equation*}
            \iint |\tilde{Q}V|^{2}d\theta\,dT= ||AV||^{2}+||BV||^{2}+\langle[A,B]V,V\rangle.
        \end{equation*}
        Repeated integration by parts gives
        \begin{align}
            ||AV||^{2}&=||h^{2}\partial_{T}^{2}V||^{2}\notag\\
            &\qquad +||(1+hc-\frac{1}{2}h^{2}c^{\prime})V||^{2}\label{E:firstfriend}\\
            &\qquad +||(1+\epsilon e^{\epsilon T})^{2}h^{2}\partial_{\theta}^{2}V||^{2}\label{E:secondfriend}\\
            &\qquad +h^{3}\langle V,(c^{\prime\prime}-\frac{1}{2}hc^{\prime\prime\prime})V\rangle\notag\\
            &\qquad -2\langle h\partial_{T}V,(1+hc-\frac{1}{2}h^{2}c^{\prime})h\partial_{T}V\rangle\notag\\
            &\qquad -2\epsilon^{3}h^{2}\langle h\partial_{\theta}V,(1+2\epsilon e^{\epsilon T})e^{\epsilon T}h\partial_{\theta}V\rangle\notag\\
            &\qquad +2\langle h^{2}\partial_{T,\theta}^{2}V,(1+\epsilon e^{\epsilon T})^{2}h^{2}\partial_{T,\theta}^{2}V\rangle\notag\\
            &\qquad -2\langle h\partial_{\theta}V,(1+hc-\frac{1}{2}h^{2}c^{\prime})(1+\epsilon e^{\epsilon T})^{2}h\partial_{\theta}V\rangle,\label{E:Trouble}
        \end{align}

        \begin{align}
            ||BV||^{2}&=||(2+hc)h\partial_{T}V||^{2}-\frac{1}{4}h^{4}||c^{\prime}V||^{2}-\frac{1}{2}h^{4}\langle V,c^{\prime\prime}cV\rangle
            -h^{3}\langle V,c^{\prime\prime}V\rangle,\notag
        \end{align}

        and
        \begin{align}
            \langle [A,B]V,V\rangle
            &=-2h^{2}\langle c^{\prime}h\partial_{T}V,h\partial_{T}V\rangle\notag\\
            &\qquad -2h^{2}\langle c^{\prime}V,V\rangle+h^{3}\langle(c^{\prime\prime}-cc^{\prime})V,V\rangle +\frac{1}{2}h^{4}\langle(cc^{\prime\prime}+c^{\prime\prime\prime})V,V\rangle\notag\\
            &\qquad + 2h\epsilon^{2}\langle(2+hc)(1+\epsilon e^{\epsilon T})e^{\epsilon T}h\partial_{\theta}V,h\partial_{\theta}V\rangle.\label{E:commutatorfriend}
        \end{align}

        \vspace{12pt}

        Also, we recall that $$c(T)=-\epsilon^{2}e^{\epsilon T}(1+\epsilon e^{\epsilon T})^{-1}$$ so that
        $$c^{\prime}(T)=-\epsilon^{3}e^{\epsilon T}(1+\epsilon e^{\epsilon T})^{-2}$$ is also a negative quantity.

        Most of the terms in the above expansions may be absorbed into other terms when we take $0<h$ to be sufficiently small. It is only the term (\ref{E:Trouble}) that gives some difficulty. We write (\ref{E:Trouble}) as
        \begin{equation}\tag{\ref{E:Trouble}$'$}
            -2\langle h\partial_{\theta}V,(1+\lambda hc)(1+\epsilon e^{\epsilon T})^{2}h\partial_{\theta}V\rangle
            -2\langle h\partial_{\theta}V,((1-\lambda)hc-\frac{1}{2}h^{2}c^{\prime})(1+\epsilon e^{\epsilon T})^{2}h\partial_{\theta}V\rangle.
        \end{equation}
        Here $\lambda\in\Rbb$ is to be determined; as we will see, any $2<\lambda<3$ will suffice.

        For the first term of (\ref{E:Trouble}$^{\prime}$), we use the elementary inequality
        \begin{align}
            2\langle(1+\lambda hc)^{1/2}&V,(1+\lambda hc)^{1/2}(1+\epsilon e^{\epsilon T})^{2}h^{2}\partial_{\theta}^{2}V\rangle\notag\\
            &\geq -\langle(1+\lambda hc)V,V\rangle\label{E:firstguy}\\
            &\qquad\qquad -\langle (1+\lambda hc)(1+\epsilon e^{\epsilon T})^{2}h^{2}\partial_{\theta}^{2}V,(1+\epsilon e^{\epsilon T})^{2}h^{2}\partial_{\theta}^{2}V\rangle.\label{E:secondguy}
        \end{align}
        Now (\ref{E:firstguy}) is absorbed into (\ref{E:firstfriend}) when $\lambda>2$, and (\ref{E:secondguy}) may be absorbed into (\ref{E:secondfriend}) when $\lambda>0$  (in both cases we are left with an order $h$ term).

        As for the second term in (\ref{E:Trouble}$^{\prime}$), it may be absorbed into (\ref{E:commutatorfriend}) as long as $\lambda<3$. All the terms are thus accounted for, completing the proof of (\ref{E:NewerCarlemanEst}) and of the proposition.
    \end{proof}

    \vspace{24pt}

\emph{End of proof of Theorem~\ref{T:uniqDiffIneq}.} We write $U(T,\theta):=u(x_{1},x_{2})$. Since we are only considering $T<T_{0}(\ll 0)$, our hypothesized upper bound (\ref{E:DiffIneq}) gives
\begin{equation*}
    |QU|\leq Ce^{T}(|U|+|\partial_{T}U|+|\partial_{\theta}U|).
\end{equation*}
Now we let $\psi\in C^{\infty}(\Rbb)$ be such that
\begin{equation*}
    \begin{cases}
    \psi=1\quad\text{in }(-\infty,T_{0}-1)\\
    \psi=0\quad\text{in }(T_{0},\infty),
    \end{cases}
\end{equation*}
and we set $$U^{\psi}(T,\theta)=\psi(T)U(T,\theta).$$

The vanishing hypothesis on $u$ says that for every $N$ there exists a constant $C_{N}$ such that $$|u(x)|\leq C_{N}|x|^{N}$$ in a neighborhood of the origin, so that, in the new coordinates, for any $N$ we have $$|U(T,\theta)|\leq C_{N}e^{NT}$$ for $T$ in a neighborhood of $-\infty$. Therefore $$\iint |U^{\psi}|^{2}e^{-NT}d\theta\,dT <\infty$$ for any $N$. The same argument holds for all derivatives of $U^{\psi}$. We then let $\chi\in C^{\infty}(\Rbb)$ be such that
\begin{equation*}
    \begin{cases}
    \chi=0\quad\text{in }(-\infty,-2)\\
    \chi=1\quad\text{in }(-1,\infty),
    \end{cases}
\end{equation*}
and for $R>0$ we let $\chi_{R}(T)=\chi(T/R)$. We may apply Proposition~\ref{P:Carlem} to $\chi_{R}(T)U^{\psi}(T,\theta)$ and take the limit as $R\rightarrow\infty$; by the Dominated Convergence Theorem, Proposition~\ref{P:Carlem} thus holds for $U^{\psi}$.

The righthand side of (\ref{E:CarlemanEst}) is then
\begin{align}
    \iint|h^{2}QU^{\psi}|^{2}e^{-2T/h}d\theta\,dT
    &=h^{4}\iint|\psi QU +\psi^{\prime\prime}U+2\psi^{\prime}\partial_{T}U+c\psi^{\prime}U|^{2}e^{-2T/h}d\theta\,dT\notag\\
    &\leq C h^{4}\iint e^{2T}(|U^{\psi}|^{2}+|\partial_{T}U^{\psi}|^{2}+|\partial_{\theta}U^{\psi}|^{2})e^{-2T/h}d\theta\,dT\label{E:errorend}\\
     &\qquad\qquad\qquad +C h^{4}\iint_{T_{0}-1}^{T_{0}}(|U|^{2}+|\partial_{T}U|^{2})e^{-2T/h}d\theta\,dT.\label{E:decrease}
\end{align}
Since $2T<\epsilon T$, the term (\ref{E:errorend}) is bounded by
\begin{equation*}
    Ch^{2}\iint(|U^{\psi}|^{2}+|h\partial_{T}U^{\psi}|^{2}+|h\partial_{\theta}U^{\psi}|^{2})e^{-2T/h+\epsilon T}d\theta\,dT,
\end{equation*}
and hence can be absorbed into the lefthand side of (\ref{E:CarlemanEst}) when $h>0$ is sufficiently small.

Since $U$ and $\partial_{T}U$ are bounded, the term (\ref{E:decrease}) is bounded by
\begin{equation*}
    Ch^{5}e^{-2(T_{0}-1)/h}.
\end{equation*}
Hence we have
\begin{align*}
    &h\iint\left(|U^{\psi}|^{2}+|h\partial_{T}U^{\psi}|^{2}+|h\partial_{\theta}U^{\psi}|^{2}+|h^{2}\partial_{T}^{2}U^{\psi}|^{2}
            +|h^{2}\partial_{T,\theta}^{2}U^{\psi}|^{2}+|h^{2}\partial_{\theta}^{2}U^{\psi}|^{2}\right)e^{-2T/h+\epsilon T}d\theta\,dT\\
    &\qquad\qquad\qquad\qquad\qquad\qquad \leq Ch^{5}e^{-2(T_{0}-1)/h}.
\end{align*}
Letting $h\rightarrow 0$, we see that $U=0$ when $T<T_{0}-1$, as otherwise the left side grows faster than the right side. Hence the original function $u$ vanishes in a neighborhood of the origin.

We have thus shown that the set of points where $u$ vanishes to infinite order is an open set. The complement is obviously also an open set, so by the connectedness of $X$ we have that $u=0$ in $X$. This concludes the proof of the theorem.
\end{proof}

\vspace{12pt}


\begin{thebibliography}{20}
\small
    \bibitem{R:AlinBao}
        S. Alinhac and M. S. Baouendi.
        Uniqueness for the characteristic Cauchy problem
        and strong unique continuation for higher order partial differential inequalities.
        Amer. J. Math. 102 (1980), no. 1, 179--217.

    \bibitem{R:Aronszajn}
        N. Aronszajn.
        A unique continuation theorem for solutions of elliptic partial differential equations
        or inequalities of second order.
        J. Math. Pures Appl. (9) 36 (1957), 235--249.

    \bibitem{R:Carleman}
        T. Carleman.
        Sur un probl\`{e}me d'unicit\'{e} pur les syst\`{e}mes d'\'{e}quations
        aux d\'{e}riv\'{e}es partielles \`{a} deux variables ind\'{e}pendantes. (French)
        Ark. Mat., Astr. Fys. 26, (1939). no. 17, 9 pp.

    \bibitem{R:ZwSC}
        L. C. Evans and M. Zworski.
        Semi-classical analysis, Edition 0.3.
        www.math.berkeley.edu/$\sim$zworski/semiclassical.pdf, 2007.

    \bibitem{R:GromovJ}
        M. Gromov.
        Pseudoholomorphic curves in symplectic manifolds.
        Invent. Math. 82 (1985), no. 2, 307--347.

    \bibitem{R:HartWint}
        P. Hartman and A. Wintner.
        On the local behavior of solutions of non-parabolic partial differential equations.
        Amer. J. Math. 75 (1953), 449--476.

    \bibitem{R:McDSalJ}
        D. McDuff and D. Salamon.
        $J$-holomorphic curves and symplectic topology.
        American Mathematical Society Colloquium Publications, 52.
        American Mathematical Society, Providence, RI, 2004.

    \bibitem{R:HoUniq}
        L. H\"{o}rmander.
        Uniqueness theorems for second order elliptic differential equations.
        Comm. Partial Differential Equations 8 (1983), no. 1, 21--64.

    \bibitem{R:HoIII}
        L. H\"{o}rmander.
        The analysis of linear partial differential operators. III.
        Pseudodifferential operators. Grundlehren der Mathematischen Wissenschaften
        [Fundamental Principles of Mathematical Sciences], 274. Springer-Verlag, Berlin, 1985.

\end{thebibliography}
\end{document}